\newtheorem{theorem}{Theorem}[section]
\newtheorem{lemma}{Lemma}[section]
\theoremstyle{definition}
\newtheorem{proposition}{Proposition}
\def\cal{\mathcal}
\begin{document}
\title[Double exponential growth of the vorticity gradient\ldots]
{Double exponential growth of the vorticity gradient for the
two-dimensional Euler equation}

\author{Sergey A. Denisov}

\email{denissov@math.wisc.edu}

\thanks{{\it Keywords: Two-dimensional Euler equation, growth of the vorticity gradient} \\
\indent{\it 2000 AMS Subject classification:} primary 76B99,
secondary 76F99}

\address{University of Wisconsin-Madison,
Mathematics Department, 480 Lincoln Dr. Madison, WI 53706-1388, USA}

\maketitle

\maketitle
\begin{abstract}
For two-dimensional Euler equation on the torus, we prove that the
$L^\infty$--norm of the vorticity gradient can grow as double
exponential over arbitrary long but finite time provided that at
time zero it is already sufficiently large. The method is based on
the perturbative analysis around the singular stationary solution
studied by Bahouri and Chemin in \cite{chemin1}. Our result on the
growth of the vorticity gradient is equivalent to the statement that
the operator of Euler evolution is linearly unbounded in Lipschitz
norm for any time $t>0$.
\end{abstract} \vspace{1cm}

\section{Introduction and some upper bounds}

Consider the two-dimensional Euler equation for the vorticity
\begin{equation}
\dot\theta=\nabla\theta \cdot \psi, \quad \psi=\nabla^\perp
\Delta^{-1}\theta, \quad \theta(x,y,0)=\theta_0(x,y) \label{euler}
\end{equation}
and $\theta$ is $2\pi$--periodic in both $x$ and $y$ (that is, the
equation is considered on the torus $\mathbb{T}^2$). We assume that
$\theta_0$ has zero average over $\mathbb{T}^2$ and then
$\Delta^{-1}$ is well-defined since the Euler flow is
area-preserving and the average of $\theta(\cdot,t)$ is zero as
well. Denote the operator of Euler evolution by $\cal{E}_t$, i.e.
\[
\theta(t)=\cal{E}_t\theta_0
\]

 The global existence of the smooth solution for smooth
initial data is well-known and is due to Wolibner \cite{wol} (see
also \cite{March}). The estimate on the possible growth of the
Sobolev norms, however, is double exponential.  We sketch the proof
of this bound for $H^2$--norm. The estimates for $H^s, s>2$ can be
obtained similarly. More general results on regularity can be found
in \cite{const}. Let
\[
j_k(t)=\|\theta(t)\|_{H^k}
\]

\begin{lemma}If $\theta$ is the smooth solution of (\ref{euler}),
then
\begin{equation}
j_2(t)\leq \exp\Bigl(\frac{(1+2\log^+
j_2(0))\exp(C\|\theta_0\|_\infty t)-1}{2}\Bigr)\label{upper1}
\end{equation}

\end{lemma}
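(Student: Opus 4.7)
My plan is to combine three ingredients: the conservation of $\|\theta\|_\infty$ under (\ref{euler}), a standard $H^2$-energy estimate, and a logarithmic interpolation inequality that controls $\|\nabla\psi\|_\infty$ in terms of $\|\theta\|_\infty$ and $j_2$.

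First I would exploit the fact that $\psi$ is divergence free, so that (\ref{euler}) is a pure transport equation for $\theta$; consequently every $L^p$ norm of $\theta$ is conserved and, in particular, $\|\theta(t)\|_\infty=\|\theta_0\|_\infty$ for all $t\ge 0$. Next I would differentiate $\|\theta\|_{H^2}^2$, pair $(1-\Delta)\theta$ with $(1-\Delta)(\psi\cdot\nabla\theta)$ in $L^2$, and integrate by parts: the leading transport contribution $\psi\cdot\nabla(1-\Delta)\theta$ drops out because $\nabla\cdot\psi=0$, and the remaining commutator $[(1-\Delta),\psi\cdot\nabla]\theta$ is controlled, via Kato--Ponce, by $C\|\nabla\psi\|_\infty\,\|\theta\|_{H^2}$. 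This produces the key differential inequality
\[
\tfrac{d}{dt}\, j_2^2 \;\le\; C\,\|\nabla\psi\|_\infty\, j_2^2.
\]

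The principal obstacle is that $\nabla\psi$ is essentially a Riesz transform of $\theta$ and therefore does not map $L^\infty$ into $L^\infty$; a naive Sobolev embedding would destroy the double exponential. My remedy is the two-dimensional Brezis--Gallouet / Brezis--Wainger logarithmic Sobolev inequality
\[
\|\nabla\psi\|_\infty \;\le\; C\,\|\theta\|_\infty\bigl(1+\log^+(j_2/\|\theta\|_\infty)\bigr),
\]
which exploits the fact that $H^2(\mathbb{T}^2)$ sits just above the Lipschitz scale. Combined with the conservation from the first step, this turns the energy inequality into
\[
\tfrac{d}{dt}\, j_2 \;\le\; C\,\|\theta_0\|_\infty\,(1+\log^+ j_2)\, j_2.
\]

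Finally I would set $u(t)=1+2\log^+ j_2(t)$, so that the preceding inequality rewrites as $u'(t)\le C\|\theta_0\|_\infty\, u(t)$, and apply Gr\"onwall to obtain $u(t)\le u(0)\exp(C\|\theta_0\|_\infty t)$; solving for $j_2(t)$ produces exactly (\ref{upper1}). The only genuinely delicate points are choosing the right normalisation in the logarithmic Sobolev step so that the constants match the stated form of (\ref{upper1}), and the commutator estimate at the $H^2$ level; both are by now standard in the two-dimensional Euler literature and the overall argument is robust under replacing $H^2$ by $H^s$, $s>2$.
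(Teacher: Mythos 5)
Your overall strategy — conservation of $\|\theta\|_\infty$, an $H^2$-energy inequality in which only the velocity gradient survives, a logarithmic Sobolev bound for that velocity gradient, and a Gr\"onwall step with $u(t)=1+2\log^+ j_2(t)$ — is exactly the paper's approach, and your final Gr\"onwall manipulation does reproduce (\ref{upper1}). But the energy-estimate step, as written, has a gap. The Kato--Ponce commutator estimate applied to $[(1-\Delta),\psi\cdot\nabla]\theta$ (with $\psi=\nabla^\perp\Delta^{-1}\theta$ the velocity) does \emph{not} give $C\|\nabla\psi\|_\infty\|\theta\|_{H^2}$; it gives two terms,
\[
\bigl\|[(1-\Delta),\psi\cdot\nabla]\theta\bigr\|_2 \lesssim \|\nabla\psi\|_\infty\|\theta\|_{H^2} + \|\psi\|_{H^2}\,\|\nabla\theta\|_\infty,
\]
and the second term is not controlled by the quantities you have in play: in two dimensions $\|\nabla\theta\|_\infty$ is not bounded by $\|\theta\|_{H^2}$ (this is the borderline Sobolev embedding), and a logarithmic correction would pull in $\|\theta\|_{H^s}$ for some $s>2$, so the estimate would not close at the $H^2$ level for a generic divergence-free transport velocity.

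What rescues the argument — and what the paper uses implicitly when its displayed Leibniz expansion of $\Delta\dot\theta$ contains no $\theta_x\Delta\psi_y-\theta_y\Delta\psi_x$ terms — is the Biot--Savart structure specific to 2D Euler. The problematic piece of the commutator is $(\Delta\psi)\cdot\nabla\theta$, and since $\psi=\nabla^\perp\Delta^{-1}\theta$ one has $\Delta\psi=\nabla^\perp\theta$, so $(\Delta\psi)\cdot\nabla\theta=\nabla^\perp\theta\cdot\nabla\theta\equiv 0$. That cancellation, not Kato--Ponce by itself, is what reduces the commutator to the term $2\nabla\psi_j\cdot\nabla\partial_j\theta$ bounded by $\|\nabla\psi\|_\infty\|\theta\|_{H^2}$. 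You need to make this cancellation explicit; without it the inequality $\tfrac{d}{dt}j_2^2\le C\|\nabla\psi\|_\infty j_2^2$ is not established. Once you insert that one line, the remainder of your proof (the Brezis--Gallouet / Littlewood--Paley step and the Gr\"onwall step) is sound and coincides in substance with the paper's proof.
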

\begin{proof}
Acting on (\ref{euler}) with Laplacian we get
\[
\Delta \dot\theta=\Delta\theta_x \psi_y+2\nabla\theta_x\cdot \nabla
\psi_y-\Delta\theta_y\psi_x-2\nabla\theta_y\cdot \nabla \psi_x
\]
Multiply by $\Delta \theta$ and integrate over $\mathbb{T}^2$ to get
\begin{equation}
\partial_t\|\theta\|_{H^2}^2\lesssim  \|H(\psi)\|_\infty
\|\theta\|_{H^2}^2 \label{est1}
\end{equation}
where $H(\psi)$ denotes the Hessian of $\psi$. The next inequality
follows from the Littlewood-Payley decomposition (see \cite{const},
proposition~1.4 for more general result)
\begin{equation}
\|H(\psi)\|_\infty<C(\sigma) \|\theta\|_\infty(1+\log^+
\|\theta\|_{H^\sigma})\label{est2}
\end{equation}
for any $\sigma>1$. Notice that $\|\theta\|_\infty$ is invariant
under the motion so combine (\ref{est1}) and (\ref{est2}) to get
(\ref{upper1}).
\end{proof}
{\bf Remark 1.} In the same way one can prove bounds for higher
Sobolev norms, e.g.,
\begin{equation} \log j_4(t)\lesssim (1+\log^+
j_4(0))\exp(C\|\theta_0\|_\infty t)-1\label{upper2}
\end{equation}
The natural questions one can ask then are the following: first, how
fast can the Sobolev norms grow in time and what is the mechanism
that leads to their growth? Secondly, for fixed $t$, how does $\|
\cal E (t) \theta_0\|_{H^s}$ depend on $\|\theta_0\|_{H^s}$ when the
last expression grows to infinity? For example, given
$\|\theta_0\|_\infty \sim 1$, the right hand side in (\ref{upper1})
grows as a power function in $j_2(0)$, the degree grows
exponentially in $t$ and is more than one for any $t>0$.

Instead of working with Sobolev norms, we will be studying the
uniform norm of the vorticity gradient (or Lipschitz norm) as this
norm is more natural for the method used in the proof. It allows the
similar upper bound. We again give the sketch of the proof for
completeness.
\begin{lemma}
If $\theta_0$ is smooth and $\|\theta_0\|_\infty\sim 1$, then
\begin{equation}\label{upper31}
\|\nabla\cal E_t\theta_0\|_\infty \lesssim \exp \left( C(1+\log^+
\|\nabla\theta_0\|_\infty)e^{Ct}\right)
\end{equation}
\end{lemma}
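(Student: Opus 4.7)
The plan is to track $\nabla\theta$ along the Lagrangian trajectories generated by $\psi$, control the velocity gradient $\|\nabla\psi\|_\infty$ by a logarithm of $\|\nabla\theta\|_\infty$ via a Brezis--Gallouet type estimate analogous to (\ref{est2}), and then close the resulting logarithmic integral inequality by Gronwall's lemma. The single logarithm in the Biot--Savart estimate is exactly what produces the single exponential inside the outer exponential of (\ref{upper31}).

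First I would differentiate the transport equation in space to obtain the matrix ODE
\[
\frac{D}{Dt}\nabla\theta=-(\nabla\psi)^T\nabla\theta
\]
along the flow $\dot X=\psi(X,t)$ of the divergence-free field $\psi$. Integrating this ODE along each trajectory and taking a supremum over starting points in $\mathbb{T}^2$ yields the pointwise bound
\[
\|\nabla\theta(t)\|_\infty\leq \|\nabla\theta_0\|_\infty\exp\Bigl(\int_0^t\|\nabla\psi(\cdot,s)\|_\infty\,ds\Bigr).
\]

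Next I would establish a logarithmic Biot--Savart estimate of the form
\[
\|\nabla\psi\|_\infty\lesssim \|\theta\|_\infty\bigl(1+\log^+\|\nabla\theta\|_\infty\bigr)+\|\theta\|_2
\]
via a Littlewood--Paley decomposition of $\nabla\psi=\nabla\nabla^\perp\Delta^{-1}\theta$: low and medium frequencies are controlled by $\|\theta\|_\infty$, high frequencies by $\|\nabla\theta\|_\infty$ together with Bernstein, and the logarithm comes from the number of Paley blocks one keeps before the high-frequency cut-off. This is essentially the same argument as the one leading to (\ref{est2}) in the previous lemma, only with the $H^\sigma$ norm of $\theta$ replaced by $\|\nabla\theta\|_\infty$. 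Since $\|\theta(t)\|_\infty=\|\theta_0\|_\infty\sim 1$ and $\|\theta(t)\|_2$ is conserved by the area-preserving flow, the estimate simplifies to $\|\nabla\psi(\cdot,s)\|_\infty\lesssim 1+\log^+\|\nabla\theta(\cdot,s)\|_\infty$.

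Finally, setting $L(t):=1+\log^+\|\nabla\theta(t)\|_\infty$ and combining the previous two displays after taking logarithms, I arrive at $L(t)\leq L(0)+C\int_0^t L(s)\,ds$, and Gronwall's lemma gives $L(t)\leq L(0)\,e^{Ct}$, which unwinds to exactly (\ref{upper31}). The hard part will be the logarithmic Biot--Savart estimate in the middle step; the characteristic transport manipulation and the Gronwall step are routine ODE.
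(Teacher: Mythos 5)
Your proposal is correct and follows essentially the same strategy as the paper: a logarithmic estimate on the velocity gradient in terms of $\log^+\|\nabla\theta\|_\infty$, followed by a Gronwall closure of the resulting inequality for $L(t)=1+\log^+\|\nabla\theta(t)\|_\infty$. The only cosmetic differences are that you track $\nabla\theta$ directly via the matrix ODE $\frac{D}{Dt}\nabla\theta=-(\nabla u)^T\nabla\theta$ whereas the paper tracks the two-sided Lipschitz bound on the flow map $\Psi$ and then uses $\theta(t)=\theta_0\circ\Psi^{-1}$, and that you propose a Littlewood--Paley proof of the log estimate whereas the paper does a direct kernel split at scale $\|\nabla\theta\|_\infty^{-1}$ with the Lagrange (mean value) formula; both variants give the identical intermediate estimate and lead to the same Gronwall step.
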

\begin{proof}
If $\Psi(z,t)$ is area-preserving Euler diffeomorphism, then
\[
(\cal E_t\theta_0) (z)=\theta_0(\Psi^{-1}(z,t))
\]
On the other hand, $\Psi(z,t)$ solves
\[
\dot \Psi=-u(\Psi,t),\quad \Psi(z,0)=z
\]
where $u(z,t)=\nabla^\perp \Delta^{-1}\theta(\cdot, t)$. For the
Riesz transform we have a trivial estimate
\begin{equation}
\|H(\Delta^{-1}\theta)\|_\infty\lesssim 1+\|\theta\|_\infty
(1+\log^+ \|\nabla \theta\|_\infty)\label{ha}
\end{equation}
$\Bigl($ Indeed, without loss of generality we can evaluate the
integral at zero and assume $\theta(0)=0$. Then, e.g.,
\[
\left|\int_{B_1(0)} \frac{\xi_1 \xi_2}{|\xi|^4}
\theta(\xi)d\xi\right|\leq \int_{B_\delta(0)}\frac{1}{|\xi|^2}
|\theta(\xi)|d\xi+\int_{\delta<|z|<1}\frac{1}{|\xi|^2}
|\theta(\xi)|d\xi
\]
where $\delta^{-1}=\max\{ \|\nabla \theta\|_\infty, 2\}$. Apply now
the Lagrange formula to the first term to get (\ref{ha}). $\Bigr)$

so
\[
|u(w_1,t)-u(w_2,t)|\lesssim |w_1-w_2|b, \quad b=1+\log^+ \|\nabla
\theta(t)\|_\infty
\]
Therefore, we have
\[
|\dot f|\lesssim f b,\quad f(t)=|\Psi(z_2,t)-\Psi(z_1,t)|^2, \,
\]
After integration
\[
|z_2-z_1|\exp\left(-C\int_0^t b(\tau)d\tau\right)\leq
|\Psi(z_2,t)-\Psi(z_1,t)|\leq |z_2-z_1|\exp\left(C\int_0^t
b(\tau)d\tau\right)
\]
Since
\[
\|\nabla
\theta(z,t)\|_\infty=\sup_{z_1,z_2}\frac{|\theta_0(\Psi^{-1}(z_2,t))-
\theta_0(\Psi^{-1}(z_1,t))|} {|z_2-z_1|}
\]
we get inequality
\[
\|\nabla \theta(z,t)\|_\infty\lesssim \|\nabla \theta_0\|_\infty\exp
\left(C\int_0^t b(\tau)d\tau\right)
\]
   Taking $\log$ of the both parts and applying the
Gronwall-Bellman, we get (\ref{upper31}).

\end{proof}

In this paper, we will work only with large $\|\nabla
\theta_0\|_\infty$. For that case, we will show that, given
arbitrarily large $\lambda$, the estimate $\max_{t\in
[0,T]}\|\nabla\theta(\cdot,t)\|_\infty>\lambda^{e^{T}-1}\|\nabla\theta_0\|_\infty$
can hold for some infinitely smooth initial data. This is far from
showing that (\ref{upper1}) or (\ref{upper31}) are sharp however it
already is equivalent to the statement that $\cal E_t$ is linearly
unbounded. The question of whether $\|\nabla\theta_0\|_\infty$ can
be taken $\sim 1$ is left wide open, see discussion in the last
section.

Our results rigorously confirm the following observation: if the 2D
incompressible inviscid fluid dynamics gets into a certain
``instability mode'' then the Sobolev norms can grow very fast in
local time (i.e. counting from the time the ``instability regime"
was reached). Can the Sobolev norms grow at all infinitely in time
assuming that initially they are small? The answer to this question
is yes, see \cite{den} and \cite{KN,Nad,Yud,Yud1, shnir}. The
important questions of linear and nonlinear instabilities were
addressed before (see, e.g., \cite{fv} and references there). In the
recent paper \cite{hm}, it was proved that $\cal{E}_t$ is not
uniformly continuous on the unit ball in Sobolev spaces.

{\bf Remark 2. }It must be mentioned here that 2D Euler allows
rescaling which provides the tradeoff between the size of $\theta_0$
and the speed of the process, i.e. if $\theta(x,y,t)$ is a solution
then $\mu\theta(x,y,\mu t)$ is also a solution for any $\mu>0$.
However, in our construction we will always have $\|\theta\|_p\sim
1, \quad \forall p\in [1,\infty]$.

{\bf Remark 3.} If one replaces $\Delta^{-1}$ in (\ref{euler}) by
$\Delta^{-\alpha}$ with $\alpha>1$, then the growth of the vorticity
gradient is at most exponential, e.g.
\[
\|\nabla \cal E^{(\alpha)}_t \theta_0\|_\infty\lesssim \|\nabla
\theta_0\|_\infty \exp(C\|\theta_0\|_\infty t)
\]
Moreover, the lower exponential bound can hold for all times as long
as $\theta_0$ is properly chosen (see \cite{den}).

\section{The singular stationary solution and  dynamics on the
torus}

The following singular stationary solutions was studied before (see,
e.g., \cite{chemin1, chemin2} in the context of $\mathbb{R}^2$). We
consider the following function
\[
\theta^s_0(x,y)=sgn(x)\cdot sgn(y), \quad |x|\leq \pi, |y|\leq \pi
\]
This is a steady state. Indeed, the function
$\psi_0=\Delta^{-1}\theta_0^s$ is odd with respect to each variable
as can be verified on the Fourier side. That, in particular, implies
that $\psi_0$ is zero on the coordinate axes so its gradient is
orthogonal to them. This steady state, of course, is a weak
solution, a vortex-patch steady state. Another consequence of
$\psi_0$ being odd is that the origin is a stationary point of the
dynamics.

By the Poisson summation formula, we have
\[
\sum_{n\in \mathbb{Z}^2,n\neq (0,0)} |n|^{-2} e^{in\cdot z} =C\ln
|z|+\phi(z), \quad z\sim 0
\]
where $\phi(z)$ is smooth and even.

 Therefore,
around the origin we have
\[
\nabla \psi_0(x,y)\sim \int  \int_{B_{0.5}(0)}
\frac{(x-\xi_1,y-\xi_2)}{(x-\xi_1)^2+(y-\xi_2)^2}sgn(\xi_1)
sgn(\xi_2)d\xi_1d\xi_2+(O(y),O(x))
\]
Due to symmetry, it is sufficient to consider the domain
$D=\{0<x<y<0,001\}$. Then, taking the integrals, we see that
\begin{equation}
\mu(x,y)=(\mu_1,\mu_2)= \left(\nabla^\perp \psi_0\right)(x,y)=
\label{potok}
\end{equation}
\begin{eqnarray*}
=c_1\left(-\int_0^x \ln(y^2+\xi^2)d\xi+xr_1(x,y),\int_0^y \ln(x^2+\xi^2)d\xi +yr_2(x,y)\right) \\
=c_2(-x\log y +xO(1), y\log y+yO(1)) \quad {\rm if} \,
 (x,y)\in D
\end{eqnarray*}
The correction terms $r_{1(2)}$ are smooth.
 Without loss
of generality we will later assume that $c_2=1$ in the last formula
(so $c_1=0.5$). That can always be achieved by time-rescaling.
Notice also that the flow given by the vector-field $\mu$ is
area-preserving.

Thus, the dynamics of the point $(\alpha,\beta)\in D, \alpha\ll
\beta$ is
\begin{equation}
(C_1\beta)^{e^t}\lesssim y(t)\lesssim  (C_2\beta)^{e^t}, \quad
\alpha(C_1\beta)^{-e^t+1}\lesssim x(t)\lesssim
\alpha(C_2\beta)^{-e^t+1},\, t\in [0,t_0],\,
 \label{dex}
\end{equation}
where $t_0$ is the time the trajectory leaves the domain $D$. These estimates therefore give a bound on $t_0$.
 The attraction to the origin, the stationary point, is
double exponential along the vertical axis and the repulsion along
the horizontal axis is also double exponential.

\section{The idea}

The idea of constructing the smooth initial data for a double
exponential scenario is quite simple and roughly can be summarized
as follows: given any $T>0$, we will smooth out the singular steady state such that
the dynamics is double exponential over $[0,T]$ in a certain domain  away from the
coordinate axes. Then we will place a small but steep bump in the
area of double exponential behavior and will let it evolve hoping
that the vector field generated by this bump itself is not going to
ruin the double exponential contraction in $OY$ direction. The rest
of the paper verifies that this indeed is the case.

\section{The Model Equation}

Consider the following system of ODE's
\begin{equation}
\left\{\begin{array}{cc}
\dot{x}=\mu_1(x,y)+\nu_1(x,y,t),\quad x(\alpha,\beta,0)=\alpha\\
\dot{y}=\mu_2(x,y)+\nu_2(x,y,t), \quad y(\alpha,\beta,0)=\beta
\end{array}\right.\label{model}
\end{equation}
Here we assume the following
\begin{equation}
|\nu_{1(2)}|<0,0001\upsilon r,\quad r=\sqrt{x^2+y^2} \label{usl1}
\end{equation}
and
\begin{equation}
 \, |\nabla\nu_{1(2)}|<0,0001\upsilon \label{usl2}
\end{equation}
with small $\upsilon$ (to be specified later) and these estimates
are valid in the area of interest
\[
\aleph=\{y>\sqrt x\}\cap \{y<\epsilon_2\}\cap \{x>\epsilon_1\}
\]
where
\[
\upsilon\ll \epsilon_1\ll \epsilon_2
\]
The functions $\nu_{1(2)}$ are infinitely smooth in all variables in
$\aleph$ but we have no control over higher derivatives. We also
assume that the flow given by (\ref{model}) is area preserving. Our
goal is to study the behavior of trajectories within the time
interval $[0,T]$. In this section, the parameters will eventually be
chosen in the following order
\[
T\longrightarrow \epsilon_2\longrightarrow \epsilon_1\longrightarrow
\upsilon
\]
Here are some obvious observations:

1. If $\epsilon_{1(2)}$ are small and
\begin{equation}
\alpha \gtrsim \upsilon \left|\frac{\beta}{\log \beta}\right| \label{odno}
\end{equation}
then $x(t)$ increases and $y(t)$ decreases. This monotonicity
persists as long as the trajectory stays within $\aleph$. Assuming
that $\epsilon_{1(2)}$ are fixed, (\ref{odno}) can always be
satisfied by taking $\upsilon$ small enough, i.e.,

\begin{equation}
\upsilon\lesssim \frac{\epsilon_1|\log\epsilon_2|}{\epsilon_2}\label{uslovie1}
\end{equation}

2. We have estimates
\begin{equation}
x(-\log y+C)+\upsilon y>\dot{x}>x(-\log y-C)-\upsilon y, \quad -y(|\log
y|+C)<\dot{y}<-y(|\log y|-C)\label{rat1}
\end{equation}
The second estimate yields
\begin{equation}
e^{e^t(\log\beta+C)}> y(t)>
 e^{
e^t(\log\beta-C)} \label{rat2}
\end{equation}
Let us introduce
\[
\kappa(T,\beta)=e^{ e^T(\log\beta-C)}
\]
For $x(t)$, we have
\[
x(t)\leq\alpha\exp\left(Ct-\int_0^t \log
y(\tau)d\tau\right)+\upsilon\int_0^ty(\tau)\exp\left(C(t-\tau)-\int_\tau^t
\log y(s)ds\right)d\tau
\]
\[
x(T)<(\alpha+\upsilon\beta T) \exp\Bigl(T(C+|\log
\kappa(T,\beta)|)\Bigr)
\]
Thus, the trajectory will stay inside $\aleph$ for any $t\in [0,T]$ as
long as
\[
\alpha<\kappa^{3+T}-\upsilon\epsilon_2T
\]
and if we have
\begin{equation}
\upsilon<\kappa^{4+T}(T,\beta) \label{uusl}
\end{equation}
then the condition
\begin{equation}
\alpha<\beta^{8e^{2T}}\label{dva}
\end{equation}
is sufficient for the trajectory to stay inside $\aleph$ for $t\in [0,T]$. Thus, we are
taking
\[
\epsilon_1<\epsilon_2^{8e^{2T}}
\]
and focus on the nonempty domain
\[
\Omega_0=\{(\alpha,\beta): \epsilon_1<\alpha<\beta^{8e^{2T}},
\beta<\epsilon_2\}
\]
The condition on $\upsilon$  is (\ref{uusl}), so taking the smallest
possible $\kappa(T,\beta)$ within $\Omega_0$ we get, e.g.,
\begin{equation}
\upsilon<\epsilon_1^{10} \label{uslovie2}
\end{equation}
Then, any point from $\Omega_0$  stays inside $\aleph$ over $[0,T]$,
$x(t)$ grows monotonically and $y(t)$ monotonically  decays with the
double-exponential rate given in (\ref{rat2}).

Now, we will prove that the derivative in $\alpha$ of
$x(\alpha,\beta,t)$ grows with the double-exponential rate and this
will be the key calculation. For any $t\in [0,T]$, (\ref{potok})
yields
\begin{equation}
\left\{
\begin{array}{lc}
\displaystyle \dot{x}_\alpha=-0.5 x_\alpha\log (x^2+y^2)+x_\alpha r_1+\\
\hspace{2cm}+xx_\alpha r_{1x}+xy_\alpha r_{1y}+\nu_{1x}
x_\alpha+\nu_{1y}y_\alpha-y_\alpha
\arctan(xy^{-1}) \\
\dot{y}_\alpha=0.5y_\alpha\log (x^2+y^2)+y_\alpha r_2+yx_\alpha
r_{2x}+\\
\hspace{2cm}+yy_\alpha
r_{2y}+\nu_{2x}x_\alpha+\nu_{2y}y_\alpha+x_\alpha \arctan(yx^{-1}) &
\end{array}
\right.
\end{equation}
and $x_\alpha(\alpha,\beta,0)=1$, $y_\alpha(\alpha,\beta,0)=0$. Let
\[
f_{11}(t)=\nu_{1x}-0.5 \log (x^2+y^2)+r_1+xr_{1x}
\]
\[
 f_{12}(t)=xr_{1y} +\nu_{1y}-\arctan(xy^{-1})
\]
\[
f_{21}(t)=yr_{2x}+\nu_{2x}+\arctan(yx^{-1})
\]
\[
 \,f_{22}(t)=0.5\log (x^2+y^2)+r_2+yr_{2y}+\nu_{2y}
\]

\[
x_\alpha=\exp\left(\int_0^t f_{11}(\tau)d\tau\right)\hat x, \quad
y_\alpha=\exp\left( \int_0^t f_{22}(\tau)d\tau\right)\hat y
\]
If
\[
g=f_{11}-f_{22}
\]
then
\[
\hat x(t)=1+\int_0^t \hat x(s) f_{21}(s) \int_s^t
f_{12}(\tau)\exp\left(-\int_s^\tau g(\xi)d\xi\right)d\tau ds
\]
Since the trajectory is inside $\aleph$, we have $y>\sqrt x$ and so
\[
|f_{12}|\lesssim y+\upsilon, \quad |f_{21}|\lesssim 1 ,\quad
f_{11}>e^t(-\log \beta+C), \quad g(t)>1
\]
From (\ref{rat2}),  we get
\[
|\hat x(t)-1|\lesssim\upsilon\int_0^t |\hat x (\tau)|d\tau+\int_0^t
|\hat x(s)| \left(\int_s^t e^{e^\tau (\log\beta+C)} e^{-(\tau-s)}
d\tau\right) ds
\]
The following estimate is obvious
\[
\int_s^t e^{e^\tau (\log\beta+C)} e^{-(\tau-s)}  d\tau \ll e^{-s}
\]
as $\beta$ is small.
Assuming that
\begin{equation}
\upsilon\ll (T+1)^{-1}\label{uslovie3}
\end{equation}
and $\epsilon_2$ is small,
 we have
\[
\hat x(t)\sim 1
\]
and
\begin{equation}
x_\alpha(\alpha,\beta,T)>\left(\frac 1\beta\right)^{(e^{T}-1)/2}
\label{klyuch}
\end{equation}
The estimate (\ref{klyuch}) is the key estimate that will guarantee the necessary growth.

Now, let us place a circle $S_\gamma(\tilde{x},\tilde{y})$ with
radius $\gamma$ and center at $(\tilde{x},\tilde{y})$ into the zone
$\Omega_0$. Consider also the line segment $l=[A_1,A_2]$,
$A_1=(\tilde{x}-\gamma/2,\tilde{y}),
A_2=(\tilde{x}+\gamma/2,\tilde{y})$ in the center, parallel to $OX$.
We will track the evolution of this disc and this line segment under
the flow. We have by the Lagrange formula

\[
x(A_2,T)-x(A_1,T)>\beta^{-(e^{T}-1)/2}|A_2-A_1|
\]
From the positivity of $x_\alpha(\alpha,\beta,T)$ it follows that
the image of $l$ under the flow is a curve given by the graph of a
smooth function $\Gamma(x)$. Thus, the image of $l$ (call it $l'$)
has length at least $\beta^{-(e^{T}-1)/2}|A_2-A_1|$. Denote the
distance from $l'$ to $S'_\gamma(\tilde{x},\tilde{y})$, the image of
the circle, by $d$. Then, the domain
$\{\Gamma(x)-d<y(x)<\Gamma(x)+d, x\in (x(A_1,T),x(A_2,T))\}$ is
inside $B'_\gamma(\tilde{x},\tilde{y})$. The area of this domain is
at least
\[
d\cdot \beta^{-(e^{T}-1)/2}|A_2-A_1|
\]
Thus, assuming that the flow preserves the area, we have
\[
d\lesssim \beta^{(e^{T}-1)/2}\gamma
\] Consequently, if we place a bump in $\Omega_0$ such that the $l$ and
 $S_\gamma(\tilde{x},\tilde{y})$ correspond to  level sets, say,
$h_2$ and $h_1$ ({\bf and, what is crucial, $h_{1(2)}$ are
essentially arbitrary $0<h_1<h_2<0,0001$}), then the original slope
of at least $\sim|h_2-h_1|/\gamma$ will become not less than
\begin{equation}
 \beta^{-(e^{T}-1)/2}\cdot\left(|h_2-h_1|/\gamma\right) \label{eex}
\end{equation}
thus leading to double-exponential growth of arbitrarily large
gradients.

{\bf Remark 1.} If $\beta$ is a fixed small number, we have growth
in $T$. If $T$ is any positive fixed moment of time, we have the
growth if $\beta\to 0$.

{\bf Remark 2.} Let us reiterate the order in which the parameters
are chosen: we first fix any $T$, then small $\epsilon_2$, then
$\epsilon_1<\epsilon_2^{8e^{2T}}$. How small $\epsilon_2$ must be
taken will be determined by how large the parameter $\lambda$ is
chosen in the theorem \ref{main} below. This defines the set
$\Omega_0$. For the whole argument  to work we need to collect all
conditions on $\upsilon$: (\ref{uslovie1}), (\ref{uslovie2}),
(\ref{uslovie3}) which leads to
\begin{equation}
\upsilon<\epsilon_1^{10} \label{choiceups}
\end{equation}

\section{Small perturbations of a singular cross can also generate  double
exponential contraction in $\aleph$}

Assume that the function $\theta_1$ at any given time $t\in [0,T]$
is such that
\[
\theta_1(x,y,t)=\theta_0^s(x,y)
\]
outside the ``cross"-domain $A=\{|x-\pi k|<\tau\}\cup \{|y-\pi
l|<\tau\}$ where $\tau$ is small and $k,l\in \mathbb{Z}$.  Inside
the domain $A$ we only assume that $\theta_1$ is bounded by one in
absolute value, is even, and has zero average. Notice here that the
Euler flow preserves property of the function to be even. Given
fixed $\epsilon_{1(2)}$ and the domain $\aleph$ defined by these
constants, we are going to show that the flow generated by
$\theta_1$ can be represented in $\aleph$ in the form (\ref{model})
with $\upsilon(\tau)\to 0$ as $\tau\to 0$. We assume of course that
$\tau\ll \epsilon_1$.

For that, we only need to study
\[
F_1=\nabla\Delta^{-1}p, \quad p=\theta_1-\theta_0^s
\]
Here are some obvious properties of $F_1$

1. $F_1(0)=0$ as $\theta_{1}$ and $\theta_0^s$ are both even.

2. We have
\[
F_1(z)\sim \int_A \left(
\frac{\xi-z}{|\xi-z|^2}-\frac{\xi}{|\xi|^2}\right)p(\xi)d\xi
\]
Using the formula
\[
\left| \frac{x}{|x|^2}-\frac{y}{|y|^2}\right|=\frac{|x-y|}{|x|\cdot
|y|}
\]
we get
\[
|F_1(z)|\lesssim |z|\frac{\tau|\log\tau|}{\epsilon_1}
\]
Thus, by taking $\tau$ small, we can satisfy $(\ref{usl1})$. How
about (\ref{usl2})? For the Hessian, we have
\[
|H\Delta^{-1}p|\lesssim \epsilon_1^{-2}\tau
\]
and after combining we must have
\begin{equation}
 \epsilon_1^{-2}\tau|\log\tau|\lesssim \epsilon_1^{10} \label{sizecross}
\end{equation}
by (\ref{choiceups}). Thus, this condition on the size of the cross
guarantees that the arguments in the previous section work.

\section{The flow generated by a small steep bump in $\aleph$}

In this section, we assume that at a given moment $t\in [0,T]$, we
have a smooth even function $b(x,y,t)$ with support in $\aleph\cup
-\aleph$, with zero average, and
\[
\|b\|_2<\omega, \quad \|\nabla b\|_\infty <M
\]
(here one should think about small $\omega$ and large $M$).
We will study the flow generated by this function. Let
\[
F_2=\nabla \Delta^{-1} b
\]
Here are some properties of $F_2$

1. $F_2(0)=0$.

2. To estimate the Hessian of $\Delta^{-1}b$, consider the second
order derivatives. For example,
\[
(\Delta^{-1}b)_{\alpha\beta}(\alpha,\beta)\sim
\int\limits_{(\alpha-\xi)^2+(\beta-\eta)^2<1}
\frac{(\alpha-\xi)(\beta-\eta)}{((\alpha-\xi)^2+(\beta-\eta)^2)^2}b(\xi,\eta,t)d\xi
d\eta=
\]
\[
=\int_{1>(\alpha-\xi)^2+(\beta-\eta)^2>\rho^2}
\frac{(\alpha-\xi)(\beta-\eta)}{((\alpha-\xi)^2+(\beta-\eta)^2)^2}b(\xi,\eta,t)d\xi
d\eta\quad+
\]
\[
\int\limits_{(\alpha-\xi)^2+(\beta-\eta)^2<\rho^2}
\frac{(\alpha-\xi)(\beta-\eta)}{((\alpha-\xi)^2+(\beta-\eta)^2)^2}
\left[b(\alpha,\beta,t)+\nabla b(\xi',\eta',t)\cdot
(\xi-\alpha,\eta-\beta)\right] d\xi d\eta
\]
The first term is controlled by  $\omega \rho^{-1}$.
 By our assumption, the second term is dominated
by $M\rho$. Optimizing in $\rho$ we have
\[
\|H\Delta^{-1}b\|_\infty\lesssim \sqrt{M\omega}
\]
To guarantee the  conditions that lead to double exponential growth
with arbitrary a priory given $M$, we want to make $\omega$ so small
that conditions (\ref{usl1}) and (\ref{usl2}) are satisfied with
$\upsilon$ as small as we need (i.e., (\ref{choiceups})). The
condition $(\ref{usl2})$ is immediate and $(\ref{usl1})$ follows
from $F_2(0)=0$, Lagrange formula and the estimate on the Hessian.

\section{One stability result and the proof of the main theorem}

It is well known that given $\theta_0\in L^\infty(\mathbb{T}^2)$,
the weak solution exists and the flow can be defined by the
homeomorphic maps $ \Psi_{\theta_0}(x,y,t) $ for all $t$ so that $
\theta(x,y,t)=\theta_0(\Psi^{-1}_{\theta_0}(x,y,t)) $ where
$\Psi_{\theta_0}$ itself depends on $\theta_0$. The continuity of
this map  though is rather poor  (\cite{chemin2}, theorem 2.3,
p.99). In this section, we will need to take smooth $\theta_0$ such
that  \[\max_{t\in [0,T]}\max_{z\in \mathbb{T}^2}
|\Psi_{\theta_0}(z,t)-\Psi_{\theta_0^s}(z,t)|\to 0\] To this end, we
will  consider  $\theta_0=\theta_0^s$ outside the domain $\cal{D}$
of small area. Inside this domain we assume $\theta_0$ to be bounded
by some universal constant. The proof of Yudovich theorem (see,
e.g., the argument on pp. 313--318, proof of Proposition 8.2,
\cite{mb}) implies

\begin{equation}\label{stability}
\max_{t\in [0,T]}\max_{z\in \mathbb{T}^2}
|\Psi_{\theta_0^s}(z,t)-\Psi_{\theta_0}(z,t)|\to 0
\end{equation}
as $|\cal{D}|\to 0$.

This is the only stability result with respect to initial data that we are going to need in the argument below.

\begin{theorem}\label{main}
 For any large $\lambda$ and any $T>0$, we can find smooth initial
 data $\theta_0$ so that $\|\theta_0\|_\infty<2$  and

\[
\max_{t\in
[0,T]}\|\nabla\theta(\cdot,t)\|_\infty>\lambda^{e^{T}-1}\|\nabla\theta_0\|_\infty
\]
\end{theorem}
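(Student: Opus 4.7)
The plan is to assemble the ingredients of Sections~4--7 into a concrete construction. Given $T>0$ and large $\lambda$, I first pin down parameters in the order mandated by Section~4: a small $\epsilon_2\le\lambda^{-2}$, then $\epsilon_1<\epsilon_2^{8e^{2T}}$, then $\upsilon<\epsilon_1^{10}$, so that every trajectory starting in the nonempty set $\Omega_0$ stays in $\aleph$ on $[0,T]$ with the double-exponential rate (\ref{rat2}). Next I fix the cross-smoothing width $\tau$ to satisfy (\ref{sizecross}), and I fix bump heights $0<h_1<h_2\ll\upsilon$. The initial data $\theta_0$ is the sum of two pieces: (a) a smooth even zero-average function equal to the singular cross $\theta_0^s$ outside the strip $A$ of width $\tau$ and bounded by $1$ inside $A$; (b) a small even bump $b$ (together with its reflection through the origin) centered at some $(\tilde x,\tilde y)\in\Omega_0$ with $\tilde y\sim\epsilon_2$, whose level sets $\{b=h_1\}$ and $\{b=h_2\}$ realize the circle $S_\gamma(\tilde x,\tilde y)$ and the segment $l$ of Section~4, with $\gamma$ chosen so small that $M:=\|\nabla b\|_\infty\sim(h_2-h_1)/\gamma$ dominates $1/\tau$. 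By construction $\|\theta_0\|_\infty<1+h_2<2$ and $\|\nabla\theta_0\|_\infty\sim M$.

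I then verify that for $t\in[0,T]$ the Euler flow of $\theta_0$ fits the model (\ref{model}) on $\aleph$ with a perturbation $\nu$ obeying (\ref{usl1})--(\ref{usl2}). The velocity field splits as $\mu+F_1+F_2$, where $\mu$ is the singular cross field (\ref{potok}), $F_1$ is the smoothing correction handled by Section~5 via (\ref{sizecross}), and $F_2$ is the bump's self-contribution, bounded by $\sqrt{M(t)\omega}$ through Section~6. Since $\omega:=\|b(\cdot,t)\|_2=\|b_0\|_2\sim h_2\gamma$ is conserved by transport, and $M(t)=\|\nabla b(\cdot,t)\|_\infty$ obeys at worst a double-exponential-in-$T$ bound coming from $b(\cdot,t)=b_0\circ\Psi^{-1}$ combined with the global estimate (\ref{upper31}) applied to $\theta$, the product $M(t)\omega$ can be made uniformly smaller than $\upsilon^2$ on $[0,T]$ by a final, doubly-exponentially small choice of $h_2$. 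The Yudovich stability (\ref{stability}), applied with the bump's support playing the role of the small set $\mathcal D$, then ensures that the transported support of $b$ stays inside $\aleph$ throughout $[0,T]$, so that $F_1+F_2$ really plays the role of $\nu$ in Section~4.

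With (\ref{model}) verified on $[0,T]$ for every trajectory originating in $\Omega_0$, the key estimate (\ref{klyuch}) gives $x_\alpha(\alpha,\tilde y,T)>\tilde y^{-(e^T-1)/2}$, and the area-preservation argument culminating in (\ref{eex}) shows that the images under the flow of the pair $l,\ S_\gamma(\tilde x,\tilde y)$ satisfy a slope bound of at least $\tilde y^{-(e^T-1)/2}(h_2-h_1)/\gamma$; these curves remain level sets of the transported bump, so this slope is a lower bound on $\|\nabla\theta(\cdot,T)\|_\infty$. The choice $\tilde y\le\epsilon_2\le\lambda^{-2}$ then delivers the ratio $\lambda^{e^T-1}\|\nabla\theta_0\|_\infty$. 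The main obstacle is the middle paragraph: one must show that the bump, whose Lipschitz norm may itself grow doubly exponentially under the Euler flow, nevertheless produces a velocity correction $F_2$ that does not derail the double-exponential contraction in the $y$-direction. The combination of Section~6's $\sqrt{M\omega}$ Hessian bound, conservation of $\|b\|_2$, and the Yudovich estimate (\ref{stability}) is precisely what makes the parameter cascade $T\to\epsilon_2\to\epsilon_1\to\upsilon\to\tau\to h_2\to\gamma$ close.
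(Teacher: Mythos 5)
Your construction and most of your parameter cascade mirror the paper's exactly: the initial data is the same sum (mollified cross plus a steep even bump placed in $\Omega_0\cup-\Omega_0$), the velocity field is decomposed into $\mu+F_1+F_2$ in the same way, and the estimates from Sections~4--6 are invoked for the same purposes. The one genuine divergence is the endgame: the paper argues \emph{by contradiction}, assuming $\|\nabla\theta(\cdot,t)\|_\infty<M:=(h_2/h_1)\lambda^{e^T-1}$ for all $t\in[0,T]$, uses this \emph{modest} conditional bound in the Hessian estimate $\|H\Delta^{-1}b\|_\infty\lesssim\sqrt{M\omega}$, deduces that the model (\ref{model}) applies and the gradient exceeds $M$, and concludes. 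You instead argue \emph{directly}, invoking the a priori double-exponential bound (\ref{upper31}) to control $M(t)=\|\nabla b(\cdot,t)\|_\infty$ by roughly $\|\nabla\theta_0\|_\infty^{Ce^{CT}}$, and then choosing $\omega=\|b\|_2$ small enough to beat that. Both routes close: if one fixes the bump's slope and shrinks its height and diameter proportionally, $M(t)$ stays bounded while $\omega\to 0$, so $\sqrt{M(t)\omega}$ can be driven below $\upsilon$. The trade-off is that your version uses a much larger (though unconditional) bound on $M(t)$, hence requires a more aggressively small $h_2$ than the paper's $\sqrt{M\omega}\sim h_2\lambda^{(e^T-1)/2}$; the paper's contradiction trick lets it get away with the weaker hypothesis $M=M_0\lambda^{e^T-1}$ and never touches (\ref{upper31}) in the proof of Theorem~\ref{main}.

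Two small points to tighten. First, your cascade $\tau\to h_2\to\gamma$ is written as if $h_2$ is fixed before $\gamma$ is chosen to make the slope large, but the crucial smallness constraint $M(t)\omega<\upsilon^2$ couples $h_2$ and $\gamma$: with $M(t)\lesssim\bigl((h_2-h_1)/\gamma\bigr)^{Ce^{CT}}$ and $\omega\sim h_2\gamma$, the product behaves like $h_2\gamma^{\,1-Ce^{CT}}$ at fixed $h_2$, which \emph{grows} as $\gamma\to0$ once $Ce^{CT}>1$. You need to fix the slope $(h_2-h_1)/\gamma$ first and then send $h_2$ (and hence $\gamma$) to zero; as stated the cascade order obscures this. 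Second, the threshold the slope must dominate is $\sigma^{-1}$, the Lipschitz norm contributed by the mollified cross $\tilde\theta_\sigma$, not $1/\tau$; $\tau$ is the cross width that results during the evolution and is itself controlled via (\ref{stability}) by shrinking $\sigma$ and $|\mathrm{supp}\,b|$.
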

\begin{proof}

 Fix any $T>0$ and find $\epsilon_{1(2)}$. For larger $\lambda$,
 we have to take smaller $\epsilon_2$ (see remark 1 in the fourth section).  Identify the domain $\Omega_0$
 and place a bump (call it $b(z)$) in $\Omega_0\cup
-\Omega_0$ so that the resulting function is even. Make sure that
this bump has zero average, height $h_2$ and diameter of support
$h_1$ so that the gradient initially is of the size $\sim h_2/h_1$.
Here $h_1\ll h_2\ll 1$ will be adjusted later.

Take a smooth even function $\omega(x,y)$ supported on $B_1(0)$
such that
\[
\int_{\mathbb{T}^2} \omega(x,y)dxdy=1
\]
For positive small $\sigma$, consider
\[
\tilde\theta_\sigma(x,y)=\theta_0^s\ast \omega_\sigma \in C^\infty,
\quad \omega_\sigma=\sigma^{-2}\omega(x/\sigma,y/\sigma)
\]
We take $\sigma\ll \epsilon_1$ so  $\tilde\theta_\sigma(x,y)$ and
$\theta_0^s(x,y)$ coincide in $\aleph$.

As the initial data for Euler dynamics we take a sum
\[
\tilde\theta_\sigma(z)+b(z)
\]
Then, since $\theta_0^s$ is stationary under the flow, the stability
result (\ref{stability}) guarantees that given any $\tau$ and
keeping the same value of $h_2/h_1$, we can find $\sigma$ and $h_1$
so small that over the time interval $[0,T]$ we satisfy

1. The ``evolved bump'' $b(z,t)$ stays in the domain $\aleph$ (e.g.,
$\Psi_{\theta_0}(t)\Bigl( {\rm supp }\,b(z) \Bigr) \subset \aleph$).

2. Outside the cross of size $\tau$ (the one considered in section
five) and the support of the evolved bump $b$, the solution is
identical to $\theta_0^s$.

  Fix  $\sigma$ and $h_1'$ so small that for any $h_1<h_1'$ we have
 the size of $A$ being small, i.e. $\tau$ is small as we wish. The value of
$\tau$ must be small enough to ensure the double exponential
scenario, the conditions (\ref{usl1}) and (\ref{usl2}). For that, we
need (\ref{sizecross}).

Next, we proceed by contradiction. Assume that for all $t\in [0,T]$
we have $\|\nabla\theta(z,t)\|_\infty<M=(h_2/h_1) \lambda^{e^T-1}$.
Then, because $\|b(z,t)\|_2$ is constant in time as the flow is
area-preserving and $\|b(z,t)\|_2\lesssim h_1h_2$, we only need to
take $h_2$ so small that $ \sqrt{Mh_1h_2} $ is small enough to
guarantee the double exponential scenario and the estimate
(\ref{eex}). This gives us a contradiction as the double exponential
scenario makes the gradient's norm more than $M$ (provided that
$\epsilon_2\ll \lambda^{-2}$). For the initial value,
\[
\|\nabla \theta_0\|_\infty\sim \sigma^{-1}+h_2/h_1\sim h_2/h_1
\]
by arranging $h_{1(2)}$ (and keeping $h_1<h_1'$).

Here is an order in which parameters are chosen in  this
construction:
\[
\{T,\lambda\}\longrightarrow\epsilon_2\longrightarrow\epsilon_1\longrightarrow
\{\sigma,h_{1(2)}\}
\]
\end{proof}
\section{The operator $\cal{E}_t$ is linearly unbounded.}

The theorem \ref{main} is equivalent to the following
\begin{proposition}
The operator $\cal E_t$ is linearly unbounded for any $t>0$, i.e.
\[
\sup_{\theta_0\in C^\infty(\mathbb{T}^2), 0<\|\theta_0\|_\infty \leq
1, \theta_0\perp 1}\frac{\|\nabla \cal E_t
\theta_0\|_\infty}{\|\nabla \theta_0\|_\infty}=+\infty
\]
\end{proposition}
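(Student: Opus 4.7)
The plan is to derive the Proposition directly from Theorem \ref{main} using only the scaling symmetry noted in Remark 2 of Section 1. For the easy implication Proposition $\Rightarrow$ Theorem \ref{main}, I would observe that for any $T,\lambda>0$, linear unboundedness at $t=T$ furnishes some $\theta_0$ with $\|\theta_0\|_\infty\leq 1 < 2$ and $\|\nabla\mathcal{E}_T\theta_0\|_\infty/\|\nabla\theta_0\|_\infty > \lambda^{e^T-1}$, and the maximum over $[0,T]$ trivially dominates the value at $T$.

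For the main direction Theorem \ref{main} $\Rightarrow$ Proposition, fix $t>0$ and a large target $\Lambda$. First I would extract from the proof of Theorem \ref{main} the sharper statement that the growth factor $\beta^{-(e^s-1)/2}$ coming from (\ref{klyuch}) is monotone increasing in $s$, so that the area-preserving argument yielding (\ref{eex}) already produces the maximal ratio at the endpoint $s=T$. Applying Theorem \ref{main} with $T := t/2$ and $\lambda$ so large that $\lambda^{e^{t/2}-1}>\Lambda$, I obtain smooth, zero-mean $\theta_0$ with $\|\theta_0\|_\infty<2$ and $\|\nabla\mathcal{E}_{t/2}\theta_0\|_\infty > \Lambda\,\|\nabla\theta_0\|_\infty$. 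Setting $\mu=1/2$ in Remark 2, the Euler solution $\tilde\theta(z,s):=\tfrac{1}{2}\theta(z,s/2)$ has initial datum $\tilde\theta_0 := \tfrac{1}{2}\theta_0$, which is smooth, has zero mean, and satisfies $0 < \|\tilde\theta_0\|_\infty < 1$. Since both the amplitude and the spatial gradient scale by the same factor $\mu$, the Lipschitz ratio is preserved:
\[
\frac{\|\nabla\mathcal{E}_t\tilde\theta_0\|_\infty}{\|\nabla\tilde\theta_0\|_\infty} \;=\; \frac{\|\nabla\mathcal{E}_{t/2}\theta_0\|_\infty}{\|\nabla\theta_0\|_\infty} \;>\; \Lambda,
\]
and so the supremum in the Proposition equals $+\infty$.

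The one genuinely delicate point is the claim that the max in Theorem \ref{main} is attained at the endpoint $s=T$. If I preferred to treat Theorem \ref{main} as a black box instead of reopening its proof, I could combine it with the time-reversal symmetry $\theta(z,t)\mapsto -\theta(z,-t)$ of (\ref{euler}) and apply the Lipschitz bound (\ref{upper31}) backward over $[s^*,T]$: a huge gradient at some internal time $s^*$ transfers into a still-huge gradient at $T$ at the cost of only a double-exponential factor, easily absorbed by enlarging $\lambda$. Either route relies solely on material already developed in Sections 1--6.
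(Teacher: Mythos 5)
Your plan is the same as the paper's: derive the Proposition from Theorem~\ref{main} plus the scaling symmetry of Remark~2. The paper states the proof in two lines: from Theorem~\ref{main} with $\lambda\to\infty$ one gets, for some $\tau^*\in[0,T]$, arbitrarily large values of $\|\nabla\mathcal{E}_{\tau^*}\theta_0\|_\infty/\|\nabla\theta_0\|_\infty$; then one ``multiplies $\theta_0$ by a suitable number'' and invokes Remark~2 to move $\tau^*$ to the prescribed time $t$.

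Your version introduces an avoidable difficulty by committing to the fixed rescaling factor $\mu=1/2$. That forces you to claim that the maximum in Theorem~\ref{main} is attained at the endpoint $s=T$, which the statement of the theorem does not give: its proof is a contradiction argument that only produces \emph{some} instant in $[0,T]$ at which the gradient is large, and if the bound $\|\nabla\theta\|_\infty<M$ fails at an earlier time the hypotheses needed for the bump's Hessian estimate (and hence for (\ref{eex}) at time $T$) are no longer available. Monotonicity of the lower bound $\beta^{-(e^s-1)/2}$ does not by itself rescue the argument, since the validity of that lower bound at time $s$ is conditional on control of the gradient on $[0,s]$. The time-reversal fallback you sketch can be made to work, but it requires comparing against the possibly large initial gradient $\|\nabla\theta_0\|_\infty$ appearing in (\ref{upper31}) and it still leaves you with an initial datum of size up to $2$ rather than $\leq 1$, so it does not by itself finish the argument.

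The fix is simply to choose $\mu$ adaptively, which is what the paper means by ``a suitable number.'' Apply Theorem~\ref{main} with $T'=t/2$ (exactly as you do), obtaining $\theta_0$ with $\|\theta_0\|_\infty<2$ and $\tau^*\in(0,t/2]$ such that the Lipschitz ratio at $\tau^*$ exceeds $\lambda^{e^{t/2}-1}$. Set $\mu=\tau^*/t\leq 1/2$; then $\tilde\theta_0=\mu\theta_0$ has $\|\tilde\theta_0\|_\infty<1$, and by Remark~2 the ratio for $\tilde\theta_0$ at time $\tau^*/\mu=t$ equals the ratio for $\theta_0$ at time $\tau^*$, which is $>\lambda^{e^{t/2}-1}$. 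Letting $\lambda\to\infty$ gives the Proposition. With this choice of $\mu$, neither reopening the proof of Theorem~\ref{main} nor the time-reversal argument is needed.
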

\begin{proof}
The proof is immediate. Indeed, given any fixed $t$, we have
\[
\sup_{\tau\in [0,t]}\left(\sup_{\theta_0\in C^\infty(\mathbb{T}^2),
\|\theta_0\|_\infty = 1, \theta_0\perp 1}\frac{\|\nabla \cal
E_{\tau} \theta_0\|_\infty}{\|\nabla
\theta_0\|_\infty}\right)=+\infty
\]
by taking $\lambda\to\infty$ in the theorem \ref{main}. Then, to
have the statement at time $t$, we only need to multiply $\theta_0$
by a suitable number and use remark 2 from the first section.

Vice versa, in the theorem \ref{main} the combination
$\lambda^{e^{T}-1}$ can be replaced by arbitrary large number. In
this formulation, the statement follows from the proposition.

\end{proof}

As the statement of the theorem \ref{main} holds with any $\lambda$,
the double exponential function is not relevant at all in the
formulation itself. However, it is very special hyperbolic scenario
with double exponential rate of contraction that ultimately provided
the superlinear dependence on the initial data.

The interesting and important question is whether the vorticity
gradient can grow in the same double exponential rate starting with
initial value $\sim 1$? We do not know the answer to this question
yet and the best known bound is (see, e.g.,  \cite{den})
\[
\max_{t\in [0,T]} \|\nabla\theta(\cdot, t)\|_\infty >e^{0.001 T}
\]
for arbitrary $T$ and for $T$--dependent $\theta_0$ with
$\|\theta_0\|_\infty\sim \|\nabla\theta_0\|_\infty\sim 1$.

\section{Acknowledgment}

This research was supported by NSF grants DMS-1067413 and
DMS-0758239. The hospitality of the Institute for Advanced Study,
Princeton, NJ is gratefully acknowledged. We are grateful to T. Tao
for pointing out that the theorem \ref{main} is equivalent to
$\cal{E}_t$ being linearly unbounded and to R. Killip and A. Kiselev
for interesting comments.


\begin{thebibliography}{99}

\bibitem{chemin1} H. Bahouri, J.-Y. Chemin, Equations de transport relatives
a des champs de vecteurs non-Lipschitziens et mecanique des fluides.
(French) [Transport equations for non-Lipschitz vector fields and
fluid mechanics] Arch. Rational Mech. Anal. 127 (1994), no. 2,
159--181.

\bibitem{mb} A. Bertozzi, A. Majda, Vorticity and Incompressible Flow,
Cambridge Texts in Applied Mathematics, Cambridge University Press, 2002.

\bibitem{chemin2} J.-Y. Chemin, Two-dimensional Euler system and the
vortex patches problem, Handbook of Mathematical Fluid Dynamics,
Vol.3, 83--160.


\bibitem{const} D. Chae, P. Constantin, J. Wu, {\it Inviscid model
generalizing the 2D Euler and the surface-quasigeostrophic
equations}, Arch. Rational Mech. Anal. 202 (2011), 35--62.



\bibitem{den} S. Denisov, {\it Infinite superlinear growth of the gradient for the
two-dimensional Euler equation}, Discrete Contin. Dyn. Syst. A, Vol.
23, N3, (2009), 755--764.

\bibitem{fv}  S. Friedlander, M. Vishik,
{\it Nonlinear instability in two dimensional ideal fluids: the case
of a dominant eigenvalue},  Comm. Math. Phys., 243, (2003), no. 2,
261--273.

\bibitem{hm} A. Himonas, G. Misiolek,
{\it Non-uniform dependence on initial data of solutions to the
Euler equations of hydrodynamics.}  Comm. Math. Phys. 296 (2010),
no. 1, 285--301.



\bibitem{Yud}  V. I. Judovic, {\it The loss of smoothness of the solutions
of Euler equations with time,} (Russian) Dinamika Splosn. Sredy Vyp.
16 Nestacionarnye Problemy Gidrodinamiki (1974), 71--78, 121.

\bibitem{KN} A. Kiselev and F. Nazarov, {\it A simple energy pump for periodic 2D QGE}, preprint.

\bibitem{March}  C. Marchioro and M. Pulvirenti, ``Mathematical Theory of
Incompressible Nonviscous Fluids," Applied Mathematical Sciences,
96. Springer-Verlag, New York, 1994.

\bibitem{shnir} A. Morgulis, A. Shnirelman, V. Yudovich, {\it Loss of
smoothness and inherent instability of 2D inviscid fluid flows.}
Comm. Partial Differential Equations 33 (2008), no. 4--6, 943--968.


\bibitem{Nad}  N. S. Nadirashvili, {\it Wandering solutions of the two-dimensional Euler equation,}
  (Russian)  Funktsional. Anal. i Prilozhen., {25}  (1991), 70--71;  translation in
Funct. Anal. Appl., {25} (1991), 220--221 (1992).


\bibitem{wol} W. Wolibner, {\it Un theor$\grave{e}$me sur l'existence du
mouvement plan d'un fluide parfait, homog$\grave{e}$ne,
incompressible, pendant un temps infiniment long.} Mat. Z., 37
(1933), 698--726.



\bibitem{Yud1}  V. I. Yudovich, {\it On the loss of smoothness of the solutions of the
 Euler equations and the
inherent instability of flows of an ideal fluid.}
  Chaos,  {10}  (2000), 705--719.
\end{thebibliography}
\end{document}